\theoremstyle{plain}
\newtheorem{theorem}{Theorem}[section]
\newtheorem{corollary}[theorem]{Corollary}
\newtheorem{proposition}[theorem]{Proposition}
\theoremstyle{definition}
\newtheorem{definition}[theorem]{Definition}
\newtheorem{clm}[theorem]{Claim}
\theoremstyle{remark}
\newtheorem{que}{Question}
\newcommand{\eps}{\varepsilon}
\newcommand{\htop}{h_{\text{top}}}
\newcommand{\mdim}{mdim}
\newcommand{\mdiminf}{\underline{mdim}}
\newcommand{\mdimsup}{\overline{mdim}}
\newcommand{\tx}{\tilde{x}}
\newcommand{\ty}{\tilde{y}}
\newcommand{\hp}{\hat{p}}
\newcommand{\Orb}{Orb}
\newcommand{\supp}{supp}
\begin{document}
	
	
	\title{Metric mean dimension of irregular sets for maps with shadowing}
	
	\author{
		\name{Magdalena Fory\'s-Krawiec\textsuperscript{a}\thanks{Magdalena Foryś-Krawiec: maforys@agh.edu.pl, Piotr Oprocha: oprocha@agh.edu.pl} and Piotr Oprocha\textsuperscript{a,b}}
		\affil{\textsuperscript{a} AGH University of Krakow, Faculty of Applied
			Mathematics, al. Mickiewicza 30, 30-059 Krak\'ow, Poland; \textsuperscript{b}National Supercomputing Centre IT4Innovations, University of Ostrava,
			IRAFM,
			30. dubna 22, 70103 Ostrava,
			Czech Republic}
	}
	
	\maketitle
	
	\begin{abstract}
		We study the metric mean dimension of $\Phi$-irregular set $I_{\Phi}(f)$ in dynamical systems with the shadowing property. In particular we prove that for dynamical systems with the shadowing property containing a chain recurrent class $Y$, the values of topological entropy together with the values of lower and upper metric mean dimension of the set $I_{\Phi}(f)\cap B(Y,\eps)\cap CR(f)$ are bounded from below by the respective values for class $Y$.
	\end{abstract}
	
	\begin{keywords}
		metric mean dimension, topological entropy, irregular set, shadowing
	\end{keywords}

	\section{Introduction}
	
	Metric mean dimension, similarly to the topological entropy, measures the complexity of the dynamical behaviour of a given system. Its definition goes back to Gromov, who proposed it in \cite{Gro}, and to Lindenstrauss and Weiss who developed it further in \cite{LinWei} as an analogue of the topological dimension, combining the dynamical properties of the system together with the metric of the space. 
	
	This paper is an attempt to characterize the structure of irregular sets in dynamical systems with shadowing property over a chain recurrent class in terms of their metric mean dimension. By the classical Birkhoff ergodic theorem we know that the sequence of Birkhoff averages $\frac1n\sum_{i=0}^{n-1}\Phi(f^i(x))$ converges for a set of points with full $f$-invariant measure for a given continuous map $\Phi:X\to \mathbb{R}$. Those points are called \emph{regular points} as the behaviour of the orbits of those points represents the average behaviour in the given system. The remaining points, for which the sequence of Birkhoff averages diverges, are behaving in less regular way, capturing somehow the history of the system. Those points are referred to as \emph{$\Phi$-irregular points} and the set of all such points is called a \emph{$\Phi$-irregular set}, denoted by $I_{\Phi}(f)$. Although the set of irregular points is of measure $0$, it may have an interesting structure and dynamical properties and the orbits of $\Phi$-irregular points may present a complicated behavior, in some cases even as complex as the whole space. For more information about the $\Phi$-irregular sets and their properties in context of their topological entropy and Hausdorff dimension in various dynamical systems, we refer the reader to \cite{BarSch}, \cite{Pes}. Further results on $\Phi$-irregular sets in systems with specification property or with shadowing property may be found in \cite{BarSau}, \cite{DOT}, \cite{FKOT}, \cite{LiWu}, \cite{PesWei}. 
	
	Our research was strongly motivated by the paper of Lima and Varandas \cite{LimVar}, especially their results for the systems admitting the gluing orbit property, which we cite below, changing the notation of \cite{LimVar} slightly to match our setting:
	\begin{theorem}\cite[Theorem E]{LimVar}\label{thm:E}
		Let $f:X\to X$ be a continuous map with the gluing orbit property on compact metric space $X$ and let $\Phi:X\to \mathbb{R}^d$ be a continuous observable. Assume that $I_{\Phi}(f)\neq \emptyset$. Then  $I_{\Phi}(f)$ carries full topological pressure and full metric mean dimension. 
	\end{theorem}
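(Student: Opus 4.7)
The plan is to construct an explicit subset $Z \subseteq I_\Phi(f)$ that simultaneously witnesses full topological pressure and full metric mean dimension at every scale. Nonemptiness of $I_\Phi(f)$, together with compactness of the space of $f$-invariant Borel probability measures, yields at least two invariant measures $\mu_1,\mu_2$ with distinct averages $\int\Phi\,d\mu_1\neq\int\Phi\,d\mu_2$; otherwise every Birkhoff cluster point would coincide and no irregular orbit could exist. For each $\eps>0$ and error $\eta>0$ I would then select, via an ergodic decomposition and an $\eps$-Katok approximation, ergodic measures $\nu_1,\nu_2$ whose integrals against $\Phi$ stay distinct and whose $\eps$-scale Katok entropies jointly approximate the $\eps$-scale topological entropy of the system, so that the variational principle for metric mean dimension can eventually be invoked.

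For each $j\in\{1,2\}$ and arbitrarily large $n$, I would extract an $(n,\eps)$-separated set $E_j(n)\subset X$ with $|E_j(n)|\ge\exp\bigl(n(h_\eps(\nu_j)-\eta)\bigr)$ whose elements $x$ also satisfy $\bigl|\tfrac{1}{n}\sum_{i=0}^{n-1}\Phi(f^i x)-\int\Phi\,d\nu_j\bigr|<\eta$. Then, choosing a sequence of block lengths $(n_k)$ growing fast enough to overwhelm both the bounded gluing gaps supplied by the gluing orbit property and the cumulative contribution of all previous blocks to the running Birkhoff average, I would apply the gluing orbit property to concatenate orbit segments chosen alternately from $E_1(n_k)$ and $E_2(n_k)$ (with appropriate transition words). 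The resulting Bowen balls at each concatenation level form a Cantor-type scheme; intersecting over $k$ yields a compact $Z\subset X$ whose points have Birkhoff averages oscillating between values close to $\int\Phi\,d\nu_1$ and $\int\Phi\,d\nu_2$, hence $Z\subset I_\Phi(f)$.

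A standard counting argument shows that $Z$ is $(N_k,\eps)$-separated by at least $\prod_{j\le k}|E_{\alpha(j)}(n_j)|$ points at the cumulative time $N_k=\sum n_j+\text{gaps}$; because the gluing gaps are $\eps$-uniform and do not grow with $n_j$, this produces an exponential growth rate at scale $\eps$ matching the $\eps$-scale topological entropy up to an $O(\eta)$ error. Dividing by $N_k$ and by $|\log\eps|$, then letting first $\eta\to 0$ and then $\eps\to 0$, yields $\mdimsup(I_\Phi(f))\ge\mdimsup(f)$, and taking $\liminf$ in $\eps$ yields the corresponding bound for $\mdiminf$; the same scheme weighted by $\Phi$-Birkhoff sums along each segment gives full topological pressure. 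The principal obstacle is the uniformity in $\eps$: the auxiliary measures $\nu_j$, their Katok generators, the block lengths $n_k$, and the gap control afforded by the gluing orbit property must be chosen so that the exponential rate realized inside $Z$ tracks the $\eps$-scale topological entropy with an error that vanishes before one divides by $|\log\eps|$, since a careless $\eps$-dependent loss would survive the normalization and destroy the mean dimension lower bound. Dealing cleanly with this uniformity, rather than reproving the entropy analogue, is where the argument genuinely departs from the classical specification/Moran constructions.
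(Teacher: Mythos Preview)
The paper does not prove this theorem. Theorem~\ref{thm:E} is stated in the introduction as a result of Lima and Varandas \cite[Theorem~E]{LimVar} that motivates the paper's own work; it is quoted, not established. The paper's original contribution is Theorem~\ref{thm:mdim}, which concerns systems with the shadowing property rather than the gluing orbit property, and that is the only theorem for which a proof is given. There is therefore no proof in the present paper against which your proposal can be compared.

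That said, your outline is broadly in the spirit both of the Lima--Varandas argument and of the paper's proof of Theorem~\ref{thm:mdim}: one constructs a Moran-type fractal inside $I_\Phi(f)$ by concatenating long orbit segments drawn from $(n,\eps)$-separated sets associated with measures having distinct $\Phi$-averages, then bounds the $\eps$-scale Bowen entropy from below and divides by $-\log\eps$. You also correctly isolate the genuinely new difficulty, namely controlling the $\eps$-dependence of all auxiliary choices so that the loss vanishes after normalization by $|\log\eps|$.

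One point you gloss over deserves attention. Asserting that ``$Z$ is $(N_k,\eps)$-separated by at least $\prod_{j\le k}|E_{\alpha(j)}(n_j)|$ points'' bounds the upper capacity entropy of $Z$, not the Bowen (Hausdorff-type) quantity $h_Z(f,\eps)$ that enters the definition of relative metric mean dimension in Definition~\ref{def:Relmdim}. For non-compact, non-invariant sets these can differ, and separated-set counting alone does not suffice. The paper handles the analogous step in its own proof by placing atomic probability measures on the Moran pieces and invoking Thompson's Pressure Distribution Principle (Proposition~\ref{prop:Thomp}); Lima and Varandas use an equivalent device. Without such a measure-theoretic step your counting argument does not yet deliver the required lower bound on $h_{I_\Phi(f)}(f,\eps)$.
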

	We say that the map $f:X\to X$ has the \emph{gluing orbit property} if for any $\eps>0$ there is some integer $m>0$ such that for any points $x_1,\dots, x_k \in X$ and any positive integers $n_1,\dots, n_k $ there exists a point $y \in X$ and  some $0\leq p_1\leq\dots\leq p_k\leq m$ with the properties:
	\begin{enumerate}
		\item $d(f^i(y), f^i(x_1))\leq \eps$ for $i=0,\dots, n_1-1$,
		\item $d(f^{i+n_1+p_1+\dots+n_{j-1}+p_{j-1}}(y), f^i(x_j))\leq \eps$ for $i=0,\dots,n_j-1$ and $j=2,\dots,k$.
	\end{enumerate}
	The examples of systems with gluing orbit property but without some stronger properties are known (for example irrational rotations satisfy the gluing orbit property \cite{BTV} but have neither shadowing, nor specification property). On the other hand, all systems with gluing orbit property are transitive. Therefore it is a~closely related, yet not comparable question whether the results from \cite[Theorem E]{LimVar} hold for systems with shadowing property. 
	
	The main result of our paper is the following:
	\begin{theorem}\label{thm:mdim}
		Let $(X,f)$ be a dynamical system with shadowing property, $Y\subset X$ a~chain recurrent class and $\Phi \in \mathcal{C}(X,\mathbb{R})$ a continuous observable. If there exist ergodic measures $\mu_1,\mu_2 \in \mathcal{M}_e(Y)$ such that:
		$$
		\int\Phi d\mu_1 \neq \int\Phi d\mu_2
		$$
		then for any $\eps>0$ the set $I_{\Phi}(f)\cap B(Y,\eps)\cap CR(f)$ is nonempty and the following lower bounds hold:
		\begin{enumerate}[(a)]
			\item $\htop\left(I_{\Phi}(f)\cap B(Y,\eps)\cap CR(f),f\right) \geq \htop(Y,f),$\label{ineq:a}
			\item $\overline{\mdim}_{I_{\Phi}(f)\cap B(Y,\eps)\cap CR(f)}(f) \geq \overline{\mdim}_Y(f),$\label{ineq:b}
			\item $\underline{\mdim}_{I_{\Phi}(f)\cap B(Y,\eps)\cap CR(f)}(f)\geq \underline{\mdim}_Y(f).$\label{ineq:c}
		\end{enumerate}
	\end{theorem}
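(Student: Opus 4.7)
The plan is to combine the variational principle with a shadowing-based gluing construction that produces orbits whose Birkhoff averages oscillate between $\int\Phi\,d\mu_1$ and $\int\Phi\,d\mu_2$. For the entropy bound (\ref{ineq:a}), I would invoke the variational principle restricted to the invariant compact set $Y$ to choose, for each $\delta>0$, an ergodic measure $\nu$ supported in $Y$ whose measure-theoretic entropy approximates $\htop(Y,f)$, and then apply Katok's entropy formula to extract large $(n,\delta)$-separated sets of $\nu$-typical points whose finite Birkhoff averages of $\Phi$ are close to $\int\Phi\,d\nu$. For the metric mean dimension bounds (\ref{ineq:b})--(\ref{ineq:c}) the same extraction is carried out at every scale $\eps'\in(0,\eps)$, choosing at each scale a measure whose $\eps'$-entropy approximates the quantity defining $\mdimsup_Y(f)$, respectively $\mdiminf_Y(f)$.

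Next I would concatenate three types of blocks into a single pseudo-orbit: entropy-carrying blocks drawn from the separated sets above, and long \emph{oscillation} blocks along orbit segments of $\mu_1$-generic and $\mu_2$-generic points chosen alternately. Since $Y$ is a chain recurrent class, any two of its points are joined by arbitrarily fine chains that can be taken inside $Y$; using these connecting chains the three block types can be spliced into a single $\delta$-pseudo-orbit staying within an $\eps/2$-neighbourhood of $Y$, and by inserting periodic returns to a fixed base point one can further arrange that the whole pseudo-orbit is itself closable, hence chain recurrent. Applying the shadowing property then produces an honest orbit lying in $B(Y,\eps)\cap CR(f)$; by letting the $\mu_1$- and $\mu_2$-blocks grow sufficiently fast, the Birkhoff averages along the shadowing orbit acquire two distinct subsequential limits near $\int\Phi\,d\mu_1$ and $\int\Phi\,d\mu_2$, so the orbit belongs to $I_\Phi(f)$ and in particular the target set is nonempty.

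The counting half of the argument then exploits that different choices of the entropy-carrying block, after shadowing, remain $(n,\delta/2)$-separated, so the exponential growth rate of the original separated sets transfers to $I_\Phi(f)\cap B(Y,\eps)\cap CR(f)$; running the same construction at every scale $\eps'$ transfers the $\eps'$-growth rate as well and yields (\ref{ineq:a})--(\ref{ineq:c}) simultaneously. The main obstacle I anticipate is the simultaneous bookkeeping that keeps the shadowing orbit inside $B(Y,\eps)$, forces it into $CR(f)$ (which requires closing each stage with chains contained in $Y$ and controlling how shadowing perturbs those closings), and prevents the auxiliary oscillation and closing blocks from diluting the entropy contribution below the prescribed error. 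Inequalities (\ref{ineq:b}) and (\ref{ineq:c}) are more delicate than (\ref{ineq:a}) precisely because all of these estimates must be carried out uniformly as $\eps'\to 0$, so the lengths of the connecting and oscillation blocks must be tuned in a scale-independent way.
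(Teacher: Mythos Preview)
Your overall architecture matches the paper's: Katok's formula on $Y$ gives large separated sets of $\mu_1$-typical points, these are interleaved with $\mu_2$-typical segments using $\delta$-chains inside the chain recurrent class, and shadowing produces genuine orbits in $B(Y,\eps)\cap I_\Phi(f)$. Two substantive ingredients are missing, however, and without them the argument does not close.

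\textbf{Separated sets do not bound the Bowen entropy.} The quantities $h_Z(f,\eps)$, $\mdimsup_Z(f)$, $\mdiminf_Z(f)$ in the statement are defined via the Carath\'eodory--Bowen construction (coverings by Bowen balls of variable order), not via separated sets. For a set that is neither compact nor $f$-invariant---and $I_\Phi(f)\cap B(Y,\eps)\cap CR(f)$ is neither---the existence of large $(n,\delta/2)$-separated subsets only bounds the upper capacity entropy, which can strictly exceed $h_Z(f)$. So the step ``the exponential growth rate of the original separated sets transfers to $I_\Phi(f)\cap B(Y,\eps)\cap CR(f)$'' is a genuine gap. The paper closes it by building, from the shadowing points, a sequence of atomic probability measures $\mu_k$ supported on finite sets $Z_k$ inside a closed set $A\subset I_\Phi(f)\cap B(Y,\eps)\cap CR(f)$, estimating $\mu_r(B_n(q,\eps/2))\le e^{-(h_Y(f,4\eps)-3\gamma)(1-\tau)^2 n}$ uniformly in $r$, and invoking Thompson's Pressure Distribution Principle to conclude $h_A(f,\eps/2)\ge h_Y(f,4\eps)-3\gamma$. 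This scale-by-scale inequality is exactly what is needed to pass to (\ref{ineq:b}) and (\ref{ineq:c}); your proposal has no analogue of this step.

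\textbf{Shadowing a closable pseudo-orbit need not give a chain recurrent point.} Your sentence ``by inserting periodic returns \ldots\ the whole pseudo-orbit is itself closable, hence chain recurrent; applying shadowing then produces an honest orbit in $CR(f)$'' conflates the pseudo-orbit with its shadow. A point $\eps$-tracing a periodic $\delta$-pseudo-orbit is not automatically recurrent. The paper's fix is: for each finite stage truncate the pseudo-orbit to a periodic one of period $a_{n+1}$, shadow it, pass to the $\omega$-limit set of the shadow under $f^{a_{n+1}}$ and extract a \emph{minimal} point there (which still $\eps$-traces), and then take a limit of these minimal points along $n$. The limit is nonwandering, and under shadowing $\Omega(f)=CR(f)$. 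Your outline needs this two-step limiting procedure, or something equivalent, to justify the $CR(f)$ inclusion.
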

	
	As the reader may see, in Theorem \ref{thm:mdim} we give the lower bounds for the values of lower and upper metric mean dimension of a particular subset of $\Phi$-irregular set in $X$. We were not able, however, to prove the converse inequality for metric mean dimension, hence we leave the following question open:
	
	\begin{que}
		What additional assumptions on $(X,f)$ are neccesary for  Theorem \ref{thm:mdim} to hold with5 $\overline{mdim}_{I_{\Phi}(f)\cap B(Y,\eps)\cap CR(f)}(f)= \overline{\mdim}_Y(f)$?
	\end{que}

	\section{Preliminaries}
	\subsection{Basic notions and definitions}
	A pair $(X,f)$ consisting of a compact metric space $(X,d)$ and a continuous map $f:X\to X$ is a \emph{dynamical system}. For any point $x \in X$ we define the \emph{orbit} of a point $x$ as the set of all iterations of that point under the map $f$, that is $\Orb(x) = \{f^n(x): \; n\geq 0\}$. The set $\omega_f(x) =  \bigcap_{n\geq 1}\overline{\Orb(f^n(x))} $ is the \emph{$\omega$-limit set of a point $x$}, while the \emph{$\omega$-limit set} is defined as follows: $\omega(f) = \bigcup_{x \in X}\omega_f(x)$. Denote $\mathcal{C}(X,\mathbb{R}) = \{ \varphi:X\to \mathbb{R} : \; \varphi \text{ is continuous}\}$.  Then for dynamical system $(X,f)$
	and a continuous observable $\Phi \in \mathcal{C}(X,\mathbb{R})$ we define the $\Phi$-\emph{irregular set}:
	$$
	I_{\Phi}(f) = \left\{x \in X: \lim_{n\to\infty}\frac1n\sum_{i=0}^{n-1}\Phi(f^i(x)) \text{ diverges}\right\}
	$$
	and the  \emph{irregular set} as the set of all irregular points:
	$$
	I(f) = \bigcup_{\Phi \in \mathcal{C}(X,\mathbb{R})}I_{\Phi}(f).
	$$
	We define the $n$-th Bowen distance between two points $x,y \in X$ as follows:
	$$
	d_n(x,y) = \max\{d(f^i(x),f^i(y)): i=0,\dots,n-1\},
	$$
	and the $(n,\eps)$-Bowen ball $B_n(x,\eps)$ for any $\eps>0$, $n \in \mathbb{N}$ and a point $x \in X$ as the set of points which are $\eps$-close to $x$ according to $n$-th Bowen distance $d_n$, that is:
	$$
	B_n(x,\eps) = \{y \in X: d_n(x,y)<\eps\}.
	$$
	For any set $Z\subset X$ by $B(Z,\eps)\subset X$ we denote the $\eps$-neighborhood of a set $Z$ with respect to the metric $d$. 
	For any Borel set $Z\subset X$, $ n \in \mathbb{N}$ and $\eps>0$ we say that set $E\subset Z$ is an \emph{$(n,\eps)$-separated 
		for $Z$} if $d_n(x,y)>\eps$ for every $x,y \in E$. By $s(Z,n,\eps)$  we denote the maximal cardinality over all $(n,\eps)$-separated subsets for $Z$ in $X$ and we put $s(n,\eps) = s(X,n,\eps)$. Note that 
	the set $Z$ in the above definition does not need to be compact nor $f$-invariant. 
	\begin{definition}
		For any $\delta>0$ a sequence $\{x_n\}_{n \geq 0}\subset X$ is a \emph{$\delta$-pseudo-orbit} of $f$ if $d(f(x_n),x_{n+1})<\delta$ for all $n \in \mathbb{N}_0$. If there exists some $K>0$ such that $x_{n+K} = x_n$ for all $n \geq 0$ we say that $\{x_n\}_{n\geq 0}$ is a \emph{periodic $\delta$-pseudo-orbit}. 
	\end{definition}
	For $0\leq i < j$ a finite $\delta$-pseudo-orbit $x = \{x_n\}_{n=i}^j$ if often referred to as a \emph{$\delta$-chain from $x_i$ to $x_j$} and we denote it by $x_{[i,j]}$. Similarly, if $x\in X$, then we denote $x_{[i,j]}=\{f^i(x),f^{i+1}(x),\ldots, f^j(x)\}$.
	\begin{definition}
		Map $f:X\to X$ satisfies \textit{shadowing property} if for any $\eps>0$ there exists $\delta>0$ such that for any $\delta-$pseudo-orbit $\{x_n\}_{n\geq 0}$ there exists a point $y \in X$ such that $d(x_n, f^n(y))<\eps$ for $n \in \mathbb{N}_0$. 
	\end{definition}
	We say that the point $y$ from the above definition $\eps-$\emph{shadows} the $\delta$-pseudo-orbit $\{x_i\}_{i\geq 0}$ (or simply: \emph{shadows} the $\delta$-pseudo-orbit $\{x_n\}_{n\geq 0}$ if $\eps$ is clear from the context).
	
	For any points $x,y \in X$ and any $\eps>0$ we say that $x$ is in a \textit{chain stable} set of $y$ if there exists a sequence of points $\{x_i\}_{i=0}^n\subset X$ and an increasing sequence of positive integers $\{t_i\}_{i=0}^{n-1}$ such that:
	\begin{align*}
		x_0&=x,\\
		x_n&=y,\\
		d(f^{t_i}(x_i),x_{i+1})&<\eps \text{ for }i=0,\dots,n-1.
	\end{align*}
	
	If $x$ is in the chain stable set of $y$ and $y$ is in the chain stable set of $x$ we say that those points are \emph{chain related}. This relation is an equivalence relation. If $x$ is chain related with itself, we say that $x$ is a \emph{chain recurrent point}. By $CR(f)\subset X$ we denote the set of chain recurrent points in $X$. A \emph{chain recurrent class} is every equivalence class in $CR(f)$ given by the chain relation.
	\begin{definition}
		A subset $Y\subset X$ is \emph{internally chain transitive} if for all $x,y \in Y$ and for every $\eps>0$ there exists an $\eps$-chain from $x$ to $y$ contained in $Y$. 
	\end{definition}
	It is well known that $CR(f)$ is internally chain transitive. Note also that each chain recurrent class $Y\subset X$ is closed and $f$-invariant. 
	
	A point $x \in X$ is \emph{nonwandering} if for every neighborhood $U$ of $x$ there is some $k \in \mathbb{N}$ such that $f^k(U)\cap U \neq \emptyset$. By $\Omega(f)$ we denote the set of all nonwandering points in $X$. Note that $\Omega(f)$ is closed and $f$-invariant. 
	
	\subsection{Measures}
	
	For a dynamical system $(X,f)$  we denote by $\mathcal{M}(X)$ the set of all Borel probability measures on space $(X,\mathcal{B})$, where $\mathcal{B}$ is the $\sigma$-algebra of Borel subsets of $X$. For any measure $\mu \in \mathcal{M}(X)$ the \emph{support} of measure $\mu$, denoted $\supp(\mu)$, is the smallest closed subset $C\subset X$ such that $\mu(C)=1$. A measure $\mu \in \mathcal{M}(X)$ is $f$-invariant if $\mu(f^{-1}(A)) = \mu(A)$ for all $A \in \mathcal{B}$. A measure $\mu \in \mathcal{M}(X)$ is \emph{ergodic} if the only Borel sets satisfying $f^{-1}(B)= B$ are sets of zero or full measure, i. e. sets $B$ with $\mu(B) \in \{0,1\}$. The sets of all $f$-invariant and all ergodic measures are denoted by $\mathcal{M}_f(X)$ and $\mathcal{M}_e(X)$ respectively. Set $\mathcal{M}(X)$ together with the metric given by the weak* topology of the dual space $\mathcal{C}(X,\mathbb{R})$ with the uniform norm can be considered as a compact metric space by the Riesz representation theorem. The convergence in this metric space is defined as follows: the sequence $\{\mu_n\}_{n \in \mathbb{N}}$ converges to a measure $\mu \in \mathcal{M}(X)$ in the weak* topology if the following holds for any $\Phi \in \mathcal{C}(X,\mathbb{R})$:
	$$
	\lim_{n\to\infty}\int\Phi d\mu_n = \int \Phi d\mu.
	$$
	For more information considering measures in the context of dynamical systems we refer the reader to \cite{Den}.
	
	\subsection{Topological entropy and metric mean dimension}
	To start with, we consider the definition of topological entropy for noncompact sets after Bowen given in \cite{Bow}.
	The reader should keep in mind that the values of Bowen topological entropy and the upper capacity topological entropy coincide for invariant and compact spaces. Let $\mathcal{G}_n(Z,\eps)$ be the collection of all finite or countable coverings of the set $Z$ with Bowen balls $B_v(x,\eps)$ for $v\geq n$. Denote:
	$$
	C(Z, t,n,\eps,f) = \inf_{C \in \mathcal{G}_n(Z,\eps)}\sum_{B_v(x,\eps)\in C}e^{-tv}
	$$
	and 
	$$
	C(Z,t,\eps,f) = \lim_{n\to\infty}C(Z, t,n,\eps,f).
	$$
	Define:
	$$
	h_Z(f,\eps) = \inf\{t: C(Z, t,\eps,f)=0 \}= \sup \{ t: C(Z, t,\eps,f)=\infty\}.
	$$
	The \emph{Bowen topological entropy of the set $Z\subset X$} is defined as follows:
	$$
	h_Z(f) = \lim_{\eps\to 0}h_Z(f,\eps).
	$$
	Note that $h_Z(f,\eps)$ is an increasing function of $\eps$. By $\htop(f):=h_X(f)$ we denote \emph{topological entropy} of map $f$. It is well known that the above definition, using Bowen's formula, coincides with the classical definition \cite{Walt}.
	
	An important tool in our further considerations will be the \emph{Katok entropy formula} introduced in \cite{Kat}:
	\begin{equation}\label{eq:KatokEntr}
		h_{\mu}(f) = \lim_{\eps\to 0}\lim_{n\to\infty}\frac1n N_f(n,\eps,\delta),
	\end{equation}
	where $N_f(n,\eps,\delta)$ denotes the smallest number of $(n,\eps)$-Bowen balls covering a subset of $X$ of measure at least $1-\delta$ for some ergodic measure $\mu$. 
	Below we give the definition of lower and upper metric mean dimension of the space $X$ based on $(n,\eps)$-separated sets (after \cite{LinWei}, \cite{LT}): 	
	\begin{definition}\label{def:mdim}
		The \emph{upper metric mean dimension} and \emph{lower metric mean dimension} are defined, respectively, by:
		\begin{eqnarray}
			\mdimsup(f) &=&\limsup_{\eps\to 0}\frac{\limsup_{n\to\infty}\frac1n\log s(n,\eps)}{-\log \eps} \label{df:mdiminf}\\
			\mdiminf(f) &=&\liminf_{\eps\to 0}\frac{\limsup_{n\to\infty}\frac1n\log s(n,\eps)}{-\log \eps} \label{df:mdimsup}
		\end{eqnarray}
	\end{definition}
	If  the above limits are equal, we denote their common value by $\mdim(f)$ and refer to as the \emph{metric mean dimension}.
	We also consider the notion of upper and lower relative metric mean dimension, introduced after \cite{LimVar}:
	\begin{definition} \label{def:Relmdim}
		The \emph{upper relative metric mean dimension of the set $Z$} and  \emph{lower relative metric mean dimension of the set $Z$} are defined, respectively, as follows:
		$$
		\mdimsup_Z(f) =\limsup_{\eps\to 0}\frac{h_Z(f,\eps)}{-\log \eps}, \qquad
		\mdiminf_Z(f) =\liminf_{\eps\to 0}\frac{h_Z(f,\eps)}{-\log \eps}
		$$
	\end{definition}
	If the above limits are equal, we denote their common value by $\mdim_Z(f)$ and refer to as the \emph{relative metric mean dimension of the set $Z$}. Since $X$ is a compact invariant set, we have $\limsup_{n\to\infty}\frac1n s(n,\eps) = h_X(f,\eps)$ (see \cite{PesPit}, \cite{LimVar}), and therefore that $\mdiminf(f) =\mdiminf_X(f) $ and  $\mdimsup(f) =\mdimsup_X(f)$. In other words,  for the case  $Z=X$, notions of dimensions from Definition \ref{def:mdim} and Definition \ref{def:Relmdim} are the same.
	
	The following definition aims at sets $Z$ that share dynamical complexity comparable with the whole space: 
	\begin{definition} Let $Z\subset X$ be $f$-invariant. 
		\begin{enumerate}
			\item $Z$ has full topological entropy if $h_Z(f) = \htop(f)$ . 
			\item $Z$ has full metric mean dimension if:
			$$
			\mdiminf_Z(f) = \mdiminf(f) \text{ and } \mdimsup_Z(f) = \mdimsup(f).
			$$
		\end{enumerate}
	\end{definition}

	\section{Metric mean dimension of irregular sets }
	Below we present a simplified version of the generalized Pressure Distribution Principle proposed by Thompson in \cite[Proposition 3.2]{Thomp}. The notion of topological pressure appearing there, however closely related to the idea of relative metric mean dimension, is beyond the spectrum of the present paper. For more information on topological pressure we refer the reader to \cite{Walt}.
	Note also that Thompson's result does not require convergence of the sequence of probability measures $\{\mu_k\}_{k \in \mathbb{N}}$. In the proof of Theorem \ref{thm:mdim} we use this simplified version (i. e. for constant potential map $\psi=0$): 
	\begin{proposition}\label{prop:Thomp}
		Let $f:X\to X$ be a continuous map on a~compact metric space $X$ and let $Z\subset X$ be a Borel set. Suppose there are $\eps>0$, $s \in \mathbb{R}$, $K>0$ and a sequence of probability measures $\{\mu_k\}_{k \in \mathbb{N}}$ satisfying:
		\begin{enumerate}
			\item $\lim_{k\to\infty}\mu_k = \mu$ for some $\mu \in \mathcal{M}(X)$ such that $\mu(Z)>0$,
			\item  $\limsup_{k\to\infty}\mu_k(B_n(x,\eps))\leq Ke^{-ns}$ for every $n$ large enough and every ball $B_n(x,\eps)$ such that $B_n(x,\eps)\cap Z\neq \emptyset$.
		\end{enumerate} 
		Then $h_Z(f,\eps)\geq s$.
	\end{proposition}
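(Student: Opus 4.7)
The plan is to prove directly that $C(Z, s, \eps, f) \geq K^{-1}\mu(Z) > 0$; since $h_Z(f,\eps) = \inf\{t : C(Z, t, \eps, f) = 0\}$ and the set inside this infimum is an upper half-line (because $C$ is non-increasing in $t$), strict positivity of $C(Z, s, \eps, f)$ immediately forces $h_Z(f, \eps) \geq s$. Because $\mathcal{G}_n(Z, \eps) \supseteq \mathcal{G}_{n+1}(Z, \eps)$, the quantities $C(Z, s, n, \eps, f)$ are non-decreasing in $n$, so it suffices to exhibit the lower bound $K^{-1}\mu(Z)$ for every sufficiently large $n$ and pass to the limit.

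Fix $n$ large enough that assumption~(2) applies and let $\mathcal{C} = \{B_{n_i}(x_i, \eps)\}_{i \in I} \in \mathcal{G}_n(Z, \eps)$ be arbitrary. Discarding balls disjoint from $Z$ only decreases $\sum_{i} e^{-n_i s}$ while leaving a cover of $Z$, so without loss of generality every $B_{n_i}(x_i, \eps)$ meets $Z$; then (2) provides $\limsup_{k \to \infty} \mu_k(B_{n_i}(x_i, \eps)) \leq K e^{-n_i s}$ for each $i$. Each Bowen ball is open (since $d_n$ is continuous), so for any finite $F \subset I$ the set $U_F := \bigcup_{i \in F} B_{n_i}(x_i, \eps)$ is open, and the portmanteau theorem combined with finite subadditivity yields
$$
\mu(U_F) \leq \liminf_{k \to \infty} \mu_k(U_F) \leq \sum_{i \in F} \limsup_{k \to \infty} \mu_k(B_{n_i}(x_i, \eps)) \leq K \sum_{i \in F} e^{-n_i s}.
$$
Exhausting $I$ by a nested sequence of finite subsets and using continuity of $\mu$ from below, together with $Z \subseteq \bigcup_{i \in I} B_{n_i}(x_i, \eps)$, I obtain $\mu(Z) \leq K \sum_{i \in I} e^{-n_i s}$. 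Taking the infimum over $\mathcal{C}$ then gives $C(Z, s, n, \eps, f) \geq K^{-1}\mu(Z)$ for every large $n$, and letting $n \to \infty$ finishes the argument.

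The only delicate point I foresee is the interchange of $\liminf_{k \to \infty}$ with the sum, which is valid only because $F$ is finite (via $\limsup(a_k + b_k) \leq \limsup a_k + \limsup b_k$); this is precisely why the bound is first established on partial sums and only afterwards promoted to the full, possibly countable, cover by monotone continuity of $\mu$. No deeper obstacle is expected: the remaining steps are routine applications of weak$^*$ convergence and of the definitions of $\mathcal{G}_n(Z, \eps)$ and $C(Z, t, \eps, f)$.
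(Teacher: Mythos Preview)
Your argument is correct. The paper does not supply its own proof of this proposition; it is quoted without proof as the specialization (to the constant potential $\psi=0$) of Thompson's generalized Pressure Distribution Principle \cite[Proposition~3.2]{Thomp}. What you wrote is exactly the standard proof of that principle in this special case: fix a cover in $\mathcal{G}_n(Z,\eps)$, use weak$^*$ lower semicontinuity on finite unions of the open Bowen balls together with hypothesis~(2), pass to the countable union by monotone continuity of $\mu$, and conclude $C(Z,s,\eps,f)\geq K^{-1}\mu(Z)>0$. The care you take with the finite/countable interchange is appropriate and the remaining steps are routine, so nothing further is needed.
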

	We are motivated by the following result of \cite{FKOT}, which we present below with some change in the notions to match out setting:
	\begin{theorem}\cite[Theorem 3.1]{FKOT}\label{thm:FKOT}
		Let $(X,T)$ be a dynamical system with shadowing property and let $Y\subset X$ be a chain recurrent class. If $\Phi \in \mathcal{C}(X,\mathbb{R})$ is such that there exist $\mu_1,\mu_2 \in \mathcal{M}_e(Y)$ with:
		$$
		\int \Phi d\mu_1 \neq \int \Phi d\mu_2
		$$
		then $h_{I_{\Phi}(T)}(T)\geq h_Y(T)$.  
	\end{theorem}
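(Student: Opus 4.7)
The plan is, for each $\eta<h_Y(T)$ and each small $\eps>0$, to construct a Borel set $Z\subset I_\Phi(T)$ with $h_Z(T,\eps)\geq\eta$; monotonicity of the Bowen entropy in its set argument then gives $h_{I_\Phi(T)}(T,\eps)\geq\eta$, and letting $\eps\to 0$ and $\eta\to h_Y(T)$ yields the theorem. Since $Y$ is compact and $T$-invariant, $h_Y(T)$ coincides with the classical topological entropy of $T|_Y$, so by the variational principle we may fix an ergodic $\nu\in\mathcal{M}_e(Y)$ with $h_\nu(T)>\eta$. Katok's formula \eqref{eq:KatokEntr} then supplies, for every sufficiently small $\eps$, integers $n_k\to\infty$ and $(n_k,4\eps)$-separated sets $S_k\subset Y$ with $|S_k|\geq e^{n_k\eta}$.

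Write $a_i:=\int\Phi\,d\mu_i$, and pick, via Birkhoff's theorem, generic points $y_i\in\supp(\mu_i)\subset Y$ whose $\Phi$-Birkhoff averages converge to $a_i$. Let $\delta=\delta(\eps)$ be the shadowing modulus. Internal chain transitivity of $Y$ supplies $\delta$-chains inside $Y$ of bounded length $L=L(\delta)$ between any two of the finitely many joining points appearing in the construction. Choose rapidly increasing integers $T_k$ and concatenate a $\delta$-pseudo-orbit from the segments: a $T_k$-iteration block along $y_1$, a gluing chain, an $n_k$-iteration block along a chosen $s_k\in S_k$, a gluing chain, a $T_k$-iteration block along $y_2$, and a gluing chain back toward $y_1$ to begin stage $k+1$. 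Taking $T_k$ large enough that the Birkhoff average at the checkpoint just after each $y_j$-block lies within $\tfrac{1}{10}|a_1-a_2|$ of $a_j$ forces every $\eps$-shadowing point of this pseudo-orbit to belong to $I_\Phi(T)$.

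A full choice sequence $\mathbf{s}=(s_k)_{k\geq 1}$ with $s_k\in S_k$ determines, via shadowing, a point $z(\mathbf{s})\in X$; let $Z$ be the compact set of all such $z(\mathbf{s})$ (intersection of the nested level-$k$ closures). Two choices that first differ at level $k$ yield $n_k$-segments that are $4\eps$-separated in the Bowen metric, and the triangle inequality with the shadowing bound $\eps$ on each side propagates this to $2\eps$-separation of the shadowing orbits at the corresponding time. Define $\sigma_k$ to be the uniform probability on the $|S_1|\cdots|S_k|$ level-$k$ shadowing points and let $\sigma$ be any weak-$*$ accumulation point; then $\sigma(Z)=1$. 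The tree separation shows that for $N_{k-1}\leq n<N_k$, where $N_k$ denotes the total length of the pseudo-orbit through level $k$, any Bowen ball $B_n(x,\eps)$ meets at most a uniformly bounded number of level-$k$ cylinders, whence
\[
\sigma(B_n(x,\eps))\leq K\bigl(|S_1|\cdots|S_k|\bigr)^{-1}\leq K' e^{-n\eta},
\]
provided the schedule is arranged so that $N_k/(n_1+\cdots+n_k)\to 1$. Proposition \ref{prop:Thomp} then yields $h_Z(T,\eps)\geq\eta$, as required.

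The principal obstacle is scheduling $T_k$ and $n_k$ to meet two competing demands at once: $T_k$ must dominate the accumulated pre-stage length at every checkpoint in order to force divergence of the Birkhoff averages (ensuring $Z\subset I_\Phi(T)$), yet $T_k$ must be negligible relative to $n_1+\cdots+n_k$ so that the per-iterate entropy supplied by the combinatorial choices in $S_k$ is not diluted below $\eta$ when normalised by total elapsed time. Both requirements are met by picking $n_k$ inductively so large that, e.g., $T_k:=\lfloor\sqrt{n_k}\rfloor$ works. A secondary technical point is verifying that the shadowing-to-separation propagation really converts the combinatorial count $|S_1|\cdots|S_k|$ into the claimed $\sigma$-mass upper bound on Bowen balls; this reduces to a careful triangle-inequality argument together with bookkeeping of the gluing-chain lengths $L$, which are uniformly bounded and therefore absorbed into the constant $K'$.
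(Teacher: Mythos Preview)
Your scheduling cannot work, and the obstruction is structural rather than a matter of tuning constants. First, with $T_k=\lfloor\sqrt{n_k}\rfloor$ the second checkpoint already fails: after the $y_2$-block of stage $k$ the elapsed time is $\approx n_k$ and the Birkhoff sum is dominated by the $s_k$-segment, whose average you have no control over (the $S_k$ come from Katok's formula for $\nu$, not from $\mu_2$-generic points), so it is not forced near $a_2$. Pushing the average near $a_2$ would require $T_k\gg n_k$. Second, and more fundamentally, your claimed measure bound $\sigma(B_n(x,\eps))\le K(|S_1|\cdots|S_k|)^{-1}$ for all $N_{k-1}\le n<N_k$ is false whenever $n$ falls inside a $y_j$-block: at such times the choice $s_k$ has not yet been witnessed by the orbit, so $B_n(x,\eps)$ may contain \emph{every} level-$k$ cylinder with the fixed prefix $(s_1,\dots,s_{k-1})$, and the correct bound is only $(|S_1|\cdots|S_{k-1}|)^{-1}\approx e^{-\eta(n_1+\cdots+n_{k-1})}$. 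Since irregularity forces the $y_j$-blocks to dominate the accumulated length, one has $n\gg n_1+\cdots+n_{k-1}$ there, and the exponent fed into Proposition~\ref{prop:Thomp} degenerates to $0$. The condition $N_k/(n_1+\cdots+n_k)\to 1$ controls the bound only at stage endpoints, not uniformly in $n$. In short, splitting the pseudo-orbit into ``entropy blocks'' $s_k$ and disjoint ``oscillation blocks'' $y_1,y_2$ creates combinatorial dead time during which entropy-per-iterate collapses, and no schedule repairs this.

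The paper's construction avoids the trap by fusing the two roles. One first observes (variational principle) that without loss of generality $h_{\mu_1}(T)>h_Y(T)-\gamma$, so Katok's separated sets can be taken inside the set $D_1$ of $\mu_1$-\emph{generic} points; every entropy block then automatically has Birkhoff average close to $\alpha=\int\Phi\,d\mu_1$. Oscillation is produced not by long bare stretches of a $\mu_2$-generic orbit but by a second family of blocks $\Gamma_K$, each consisting of a separated $\mu_1$-generic segment of proportion $\xi$ close to $1$ followed by a short $\mu_2$-generic segment of proportion $1-\xi$. These blocks still carry $\approx\xi(h_Y-\gamma)$ entropy per unit time, yet their Birkhoff average is $\zeta=\xi\alpha+(1-\xi)\beta\neq\alpha$. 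Alternating long runs of the two block types yields oscillation between $\alpha$ and $\zeta$ while entropy accumulates at rate at least $h_Y(T)-3\gamma$ along the \emph{entire} pseudo-orbit, so the measure estimate needed for Proposition~\ref{prop:Thomp} holds at every time $n$, not just at stage boundaries.
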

	
	We are going to adjust techniques used in the proof of Theorem~\ref{thm:FKOT} so that the growth rate of $h_Z(f,\eps)$ is controlled. This will enable us to estimate the metric mean dimension of Z.
	
	A starting point for the proof of Theorem \ref{thm:FKOT} is the construction of a set $A\subset I_{\Phi}(f)$ with the property that (for $h_Y(f)$ finite): 
	$$h_A(f)\geq h_Y(f)-6\gamma$$
	for some fixed $\gamma>0$ and a chain recurrent class $Y\subset X$. While the argument is not provided directly for $h_Y(f)=\infty$ it follows the same lines, as is explained later. The construction is undertaken with some small $\eps>0$ fixed. 
		For purposes of our proof, we fix $\gamma>0$, arbitrarily large  $n \in \mathbb{N}$ and we will assume that $\eps>0$ is
		small enough, so that:
		$|h_Y(f)-h_Y(f,4\eps)|<\frac{\gamma}{2}$ in case $h_Y(f)<\infty$ and 
		$h_Y(f,4\eps)>n$ in case $h_Y(f)=\infty$. 
 Note that by compactness of $X$, we always have $h_Y(f,4\eps)<\infty$.
	
	The set $A$ is the closure of a set of points which $\eps$-trace some properly constructed $\delta$-pseudo-orbits and it is contained in $B(Y,\eps)$. In order to prove Theorem~\ref{thm:mdim} we need to modify the construction, so that the set $A$ is contained in the set of nonwandering points $\Omega(f)$. Below we present a part of the  proof of Theorem \ref{thm:FKOT} highlighting the most important steps leading to construction of the set $A$ and next, in the proof of Theorem~\ref{thm:mdim}, we introduce necessary adjustments in the construction to obtain a
	sequence of measures enabling us to apply Proposition~\ref{prop:Thomp}. 
	For the full reasoning and more detailed explanation, especially considering the proper choice of some constants, we refer the reader to the proof of \cite[Theorem 3.1]{FKOT}.  From now on we assume that $(X,f)$ is a~dynamical system with shadowing property, $Y\subseteq X$ is a chain recurrent class and $\Phi \in \mathcal{C}(X,\mathbb{R})$ is a~continuous map such that there exist $\mu_1,\mu_2 \in \mathcal{M}_e(Y)$ with: 
	$$\alpha = \int\Phi d\mu_1\neq \int\Phi d\mu_2 = \beta,$$
	assuming without loss of generality (by the Variational Principle) that  $h_{\mu_1}(f)>h_Y(f)-\gamma$  in case $h_Y(f)<\infty$ and $h_{\mu_1}(f)>h_Y(f,4\eps)$ in case $h_Y(f)=\infty$.
		Additionally we assume 
		that $\alpha<\beta$ (the proof in the case $\alpha>\beta$  is the same). 
	The proof in \cite{FKOT} goes in the following steps ((A)-(G) below), which we briefly summarize for the reader convenience. We start by fixing small constant $\eta>0$ which, while fixed, can be chosen to be arbitrarily small.
	Of course, value of $\eta$ will have a direct or indirect consequence on the choice of all other constants and objects.
	We only require that for some fixed $\xi_0\in (0,1)$, sufficiently close to $1$ to satisfy $1-\xi_0<\frac{\gamma}{h_Y(f,4\eps)}$,
	we have $9\eta < (1-\xi_0)(\beta - \alpha)$. 
	As we announced before, let us summarize main steps of the construction in \cite{FKOT}:
	
	\begin{enumerate}[(A)]
		\item Choice of sets $D_1, D_2 \subset Y$ with the property that $\mu_1(D_1)> \frac34, \mu_2(D_2)>\frac34$, consisting of generic points for measures $\mu_1$ and $\mu_2$, respectively, and such that:
		\begin{align} 
			\left|\frac1n\sum_{i=0}^{n-1}\Phi(f^i(x))-\alpha\right|&<\frac{\eta}{4}\text{ for all }x \in D_1\text{ and sufficiently large }n,\label{eq:1}\\
			\left|\frac1n\sum_{i=0}^{n-1}\Phi(f^i(x))-\beta\right|&<\frac{\eta}{4}\text{ for all }x \in D_2\text{ and sufficiently large }n. \label{eq:2}
		\end{align}	
		We also fix an arbitrary $\mu_2$-generic point $y \in D_2$ and a finite open cover $\mathcal{U} = \{U_i\}_{i=1}^S$ of $Y$. 
		
		\item Construction of the family $\{E'_{m_k}\}_{k\geq 0}$ of $(m_k,4\eps)$-separated subsets of  $D_1\subset Y$. The cardinality of each set in the family $\{E'_{m_k}\}_{k\geq 0}$ is not less than $e^{m_k(h_Y(f)-\frac52\gamma)}$ in case $h_Y(f)<\infty$, and not less than $e^{m_k(h_Y(f,4\eps)-\frac{5}{2}\gamma)}$ in case $h_Y(f)=\infty$,  for a properly chosen sequence of integers $\{m_k\}_{k\geq 0}$.The family is obtained by the Katok entropy formula and it depends on particular sets $U,V \in \mathcal{U}$: each set $E'_{m_k}$ contains only those points $x \in U$ for which $f^{m_k}(x) \in V$. 
		
		At this step $\eps>0$ is fixed. It can be arbitrary, provided it is small enough so that the above estimates on the cardinality of each $E'_{m_k}$ from Katok's formula are valid. For this $\eps$ we use shadowing property obtaining $\delta>0$ such that any $\delta$-pseudo-orbit is $\frac{\eps}{4}$-traced.
		For $i,j \in \{1,\dots, S\}$ by $\omega_{ij}$ we denote the length of arbitrarily selected, but fixed for the whole construction, $\delta$-chain between $U_i$ and $U_j$ and put $\omega_{max} =\max_{i,j \in \{1,\dots, S\}}\omega_{ij}$. By chain-transitivity of $f$ on each chain-recurrent class we know
		that such chains exist. 
		
		\item The choice of integer constants $L=m_{k_1}$ and $J>m_{k_2}$ such that $\frac{m_{k_2}}{J} =\xi$ for some $k_1,k_2>0$ and $\xi \in (0,1)$ sufficiently close to $1$, $\xi<\xi_0$ and such that both $\xi J$ and $(1-\xi)J$ are integers. Those constants are necessary to control the length of blocks, which will be used later to build $\delta$-pseudo-orbits. We assume that
		$J>L>\frac{\omega_{max}M}{\eta}$, where $M$ is such that $|\Phi(x)|+\beta\leq M$ for any $x \in X$. We also demand the following conditions to hold:
		\begin{align*}
			\left|\frac{1}{m_k}\sum_{i=0}^{m_k-1}\Phi(f^i(x))-\alpha\right|&<\frac{\eta}{4} \text{ for any }x \in D_1 \text{ and }k>k_1,\\
			\left|\frac{1}{\xi J}\sum_{i=0}^{\xi J-1}\Phi(f^i(x))-\alpha\right| &<\frac{\eta}{4} \text{ for any }x \in D_1,\\
			\left|\frac{1}{(1-\xi)J}\sum_{i=0}^{(1-\xi)J-1}\Phi(f^i(x))-\beta\right|&<\frac{\eta}{4} \text{ for any }x \in D_2.
		\end{align*}
		In other words, $k_1,k_2$ and $\xi$ are such that the above formulas are implied by \eqref{eq:1} and \eqref{eq:2}.  Recall also that for $h_Y(f)<\infty$:
			\begin{align}\label{ineq:E'card}	
				|E'_{m_k}|&>e^{m_k(h_Y(f)-\frac52\gamma)}>e^{m_k(h_Y(f,4\eps-3\gamma))}\text{ for any sufficiently large }k,
			\end{align}
			while for $h_Y(f)=\infty$:
			\begin{align}
				|E'_{m_k}|&>e^{m_k(h_Y(f)-\frac52\gamma)}\text{ for any sufficiently large }k,\nonumber
			\end{align}
			so the inequality (\ref{ineq:E'card}) holds for $\xi J$, $(1-\xi)J$ and $L$ as well.
		\item Choice of $U', V' \in \mathcal{U}$ such that $y \in U'$ and $f^{(1-\xi)J}(y) \in V'$. Then we select $\delta$-chains between the sets $U', V', U,V$ whose lengths by definition are bounded by $\omega_{max}$, that is $P,Q,W<\omega_{max}$:
		\begin{enumerate}
			\item[p)] $\{p_i\}_{i=0}^P$ between $V'$ and $U$,
			\item[q)]$\{q_i\}_{i=0}^Q$ between $V$ and $U$,
			\item[w)]$\{w_i\}_{i=0}^W$ between $V$ and $U'$. 
		\end{enumerate}
		We put $K=J+W$, and obtain, by previous assumptions, that $K>L$ and $K~>~\frac{M\omega_{max}}{\eta}$, which will be useful in estimations later on.
		\item Definition of set $\Gamma_K$ containing all $\delta$-pseudo-orbits built as concatenations of the following blocks:
		\begin{enumerate}[i)]
			\item block of length $\xi J$ of the orbit of some point from $E'_{\xi J}$ (in particular  $\mu_1$-generic),
			\item $\delta$-chain $\{w_i\}_{i=0}^{W-1}$,
			\item block of length $J-m_{k_2} =(1-\xi)J$ of the orbit of the  point $y$ (in particular $\mu_2$-generic).
		\end{enumerate}
		We obtain that for both finite and infinite $h_Y(f)$ (see estimations (3.16)-(3.19) in \cite{FKOT}): 
			\begin{equation}\label{ineq:Gamma}
				|\Gamma_K|\geq e^{J(h_Y(f,4\eps)-3\gamma)}.
		\end{equation}
		\item Construction of the  set of $\delta$-pseudo-orbits $\mathcal{Z} = \{z_i\}_{i \geq 0}$ in $Y$, by cyclical combination of two types of blocks:
		\begin{enumerate}
			\item[(C1)] Blocks of length $L+Q$, that are part of the orbit $x_{[0,L-1]}$ of some $x \in E'_L$ and $\delta$-chain $\{q_i\}_{i=1}^{Q-1}$,
			\item[(C2)] Blocks of length $K+P$, that are some $\delta$-pseudo-orbit from $\Gamma_K$ and $\delta$-chain $\{p_i\}_{i=1}^{P-1}$. 
		\end{enumerate}
		The details of how many repetitions of each block are necessary in the construction may be found in inequalities (3.22) - (3.25) in the original proof in \cite{FKOT}. In particular we want the total length of the blocks (C1) and (C2) to be divisible by the same number in every step of the construction. More formally, we use the blocks of the following form:
		\begin{align*}
			\tx^{(n)} &= x^{(1)}_{[0,L-1]}q_0\dots q_{Q-1} x^{(2)}_{[0,L-1]}q_0\dots q_{Q-1}\dots x^{(l_n\lambda)}_{[0,L-1]}q_0\dots q_{Q-1},\\ 
			\ty^{(n)}  &= y^{(1)}_{[0,K-1]}p_0\dots p_{P-1}y^{(2)}_{[0,K-1]} p_0\dots p_{P-1}\dots y^{(l_n'\kappa)}_{[0,K-1]}p_0\dots p_{P-1},
		\end{align*}
		for some points $x^{(i)} \in E'_L\subseteq U$ where	$i=1,\dots,l_n\lambda$ and $y^{(j)}=(y^{(j)}_s)_{s=0}^{K-1} \in \Gamma_K$ where $j =1,\dots, l_n'\kappa$. 
		We do not require points $x^{(i)}$, nor pseudo-orbits $y^{(j)}=(y^{(j)}_s)$ to be different for different $j$, i.e. we allow repetitions.
		In other words, each sequence $\tx^{(n)}$ is a~concatenation of $l_n\lambda$ blocks of type (C1) while $\ty^{(n)}$ is a~concatenation of $l'_n\kappa$ blocks of type (C2) for some increasing sequences of integers $\{l_n\}_{n \in \mathbb{N}}$, $\{l'_n\}_{n \in \mathbb{N}}$ and sequences $\{a_n\}_{n\in \mathbb{N}}$, $\{b_n\}_{n\in \mathbb{N}}$ defined as follows for $n\in \mathbb{N}$:
		\begin{align*}
			a_1&=0,\\
			b_n&=a_n+M_n,\\
			a_{n+1}&=b_n+M_n' = a_n+M_n+M'_n.
		\end{align*}
		The values $M_n = l_n\lambda(L+Q)$ and $M'_n = l'_n\kappa(K+P)$ determine the length of the blocks $\tx^{(n)}$ and $\ty^{(n)}$ respectively. Moreover, we assume that $\lambda Q \geq \kappa P$ which implies $\lambda L \leq \kappa K$.		 
		The details of the definitions of $M_n$ and $M'_n$ and how they depend on $a_n$ and $b_n$ may be found in formulas (3.23)-(3.25) in \cite{FKOT}.
		The $\delta$-pseudo-orbit $\mathcal{Z} = \{z_i\}_{i \geq 0}$ is defined as follows:
		$$
		z_{[0,M_1-1]} = \tx^{(1)}, \qquad  z_{[M_1,M_1+M'_1-1]} = \ty^{(1)}
		$$
		and, for $n\geq 1$:	
		$$
		z_{[a_n,b_n-1]}= \tx^{(n)}, \qquad  z_{[b_n,a_{n+1}-1]}= \ty^{(n)}.
		$$
		We denote by $\mathcal{D}$ the set of all $\delta$-pseudo-orbits $\mathcal{Z}$ constructed using the above scheme.
		
		\item Definition of the set $A$ as the closure of the set of elements of all possible orbits $\eps$-shadowing all possible $\delta$-pseudo-orbits from $\mathcal{D}$.
		Among consequences of the above construction is the inclusion $A\subset I_{\Phi}(f)$ (see conditions (3.26), (3.28) and (3.29) in \cite{FKOT}).
	\end{enumerate}
	
	Before presenting the proof of Theorem \ref{thm:mdim} let us emphasize that the main difference between the construction of the set $A$ presented above and the one we will provide in the proof below is that we 
	will consider particular sequences of periodic $\delta$-pseudo-orbits obtained by the construction. For each such sequence of periodic $\delta$-pseudo-orbits we will associate a sequence consisting of minimal points, which shadow those periodic $\delta$-pseudo-orbits. That way we obtain a subset of $B(Y,\eps)\cap I_{\Phi}(f)$
	consisting of the limit points of those 
	sequences of minimal points, hence the resulting subset consists of nonwandering points. This vague description will be explained more formally in what follows.
	
	At this point let us recall the following standard procedure which explains utility of periodic pseudo-orbits in the above construction. Consider a periodic $\delta$-pseudo-orbit $\mathcal{Z}=\{z_i\}_{i\geq 0}$, say $z_{s+i}=z_i$ for each $i$.
	If $y$ is a point that $\frac{\eps}{2}$-traces $\mathcal{Z}$ then so is each point $f^{sj}(y)$ for each $j$. But then, by continuity, every point in $\omega$-limit set $q\in C=\omega(y,f^s)$
	satisfies $d(z_i,f^i(q))\leq \frac{\eps}{2}<\eps$. But $C$ as an $f^s$-invariant set contains a minimal subset, so we may assume that $q$ above is a~minimal point.
	This way we obtain a minimal point $\eps$-tracing the pseudo-orbit $\mathcal{Z}$. This fact will be used later without any further reference.
	
	\begin{proof}[Proof of Theorem \ref{thm:mdim}] 
		Let $A$ be the set constructed in the proof of \cite[Theorem 3.1]{FKOT} as summarized in steps (A)-(G) above, with the following modifications. When choosing $L$ fix some small $\tau>0$  and increase, if needed, $L$ in step (C) above in order to satisfy the inequality $L\geq \frac{1-\tau}{\tau}\omega_{max}$. That condition ensures that: 
		\begin{equation}\label{eq:LLowBound}
			L\geq (1-\tau)(L+Q).
		\end{equation}
		When choosing $m_{k_2}$ and $J$, let us increase, if needed, the values from step (C) above in such a way that $J>m_{k_2}\geq \frac{1-\tau}{\tau}2\omega_{max}$, which implies:
		\begin{equation}\label{eq:JLowBound}
			J\geq (1-\tau)(K+P).
		\end{equation} 
		Consequently, the choice of $L$ and $J$ in step (F) together with conditions (\ref{eq:LLowBound}) and (\ref{eq:JLowBound}) assures us that:
		\begin{align}\label{LJBounds}
			\lambda(1-\tau)(L+Q)&=\kappa(1-\tau)(K+P)\leq \lambda L,\\
			\kappa(1-\tau)(K+P)&\leq \kappa J,
		\end{align} 
		where $\lambda, \kappa > 0$ are such that $\lambda(L+Q) = \kappa(K+P)$. Recall that by the construction (step (F)) we have that $\lambda Q \geq \kappa P$ and $K=J+W>L$.
		
		We are going to assure that the set $A$ consist of recurrent points. In order to do so, for any $\delta$-pseudo-orbit $\mathcal{Z} \in \mathcal{D}$ (as constructed in step (F))  
		define a sequence $\{r_n\}_{ n \geq 0}$ of periodic $\delta$-pseudo-orbits in $X$ by putting for each $n \geq 0$ and $j\geq 0$:
		$$
		(r_n)_{[j a_{n+1}, (j+1)a_{n+1}-1]} = \mathcal{Z}_{[0,a_{n+1}-1]}.
		$$
		By the definition of $a_{n+1}$ and the structure of blocks $x^{(n)}$ and $y^{(n)}$ we obtain that the block $(r_n)_{[0,a_{n+1}-1]}$ (and all its repetitions in $r_n$) is a $\delta$-pseudo-orbit starting and ending in the set $U$. Let $\{p_n\}_{n \geq 0}$ be a sequence of minimal points given by the shadowing property which $\eps$-trace the respective $\delta$-pseudo-orbits $r_n$. Let $\{p_{n_k}\}_{k \geq 0}\subset \{p_n\}_{n\geq 0}$ be a~convergent subsequence and denote $\hp_{\mathcal{Z}} = \lim_{k\to \infty}p_{n_k}$. Note that $\hp_{\mathcal{Z}}$ is a nonwandering point for $f$ which additionally is $\frac{\eps}{2}$-tracing $\mathcal{Z}$, since $\delta$ was chosen for $\frac{\eps}{4}$. Now define the set $A$ as the set of all points $\hp_{\mathcal{Z}}$ obtained in the above manner for every possible $\delta$-pseudo-orbit $\mathcal{Z}\in \mathcal{D}$. 
		As we already explained, $A \subset \Omega(f)$. Since the map $f$ has the shadowing property, the set of nonwandering points coincides with the set of chain recurrent points, that is $\Omega(f)=CR(f)$, which in particular implies $A \subset CR(f)$. It is also clear that for any $p \in A$ there exists some $\delta$-pseudo-orbit $\mathcal{Z} = \{z_i\}_{i\geq 0} \in \mathcal{D}$  such that $d(f^i(p),z_i)<\eps$, which in particular implies that $A\subset B(Y,\eps)$.
		
		The set $A$ is a subset of $I_{\Phi}(f)$, which is a straightforward consequence of the following claims, which we recall after \cite{FKOT}.
		\begin{clm}[{\cite[Claim B]{FKOT}}]
			\textit{				For every $u \in A$  
				we have: $$\liminf_{n\to\infty}\frac1n\sum_{i=0}^{n-1}\Phi(f^i(u))\leq \alpha + 4\eta.$$
		}			\end{clm}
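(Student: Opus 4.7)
The plan is to evaluate the Birkhoff averages of $\Phi$ along the orbit of $u$ at the specific subsequence of times $n=b_k$, i.e.\ the moments when the $\delta$-pseudo-orbit $\mathcal{Z}\in\mathcal{D}$ tracked by $u$ has just finished a block $\tx^{(k)}$ of type (C1). Because these blocks are concatenations of $\mu_1$-generic orbit segments (so by (C) each piece pushes the partial average toward $\alpha$), the Birkhoff average at time $b_k$ should be close to $\alpha$, up to error terms coming from the shadowing slippage, the $\delta$-chain fillers, and the total length $a_k$ of all previous blocks. Showing $\liminf$ is bounded by any subsequential limit, this will give the desired inequality.

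First I would fix $u\in A$ and the corresponding $\mathcal{Z}=\{z_i\}_{i\geq 0}\in\mathcal{D}$ with $d(f^i(u),z_i)<\eps$ for every $i\geq 0$. By uniform continuity of $\Phi$, taking $\eps$ small enough at the outset of the construction, we may assume $|\Phi(f^i(u))-\Phi(z_i)|<\eta$ for every $i$. Then I would decompose $\sum_{i=0}^{b_k-1}\Phi(z_i)=\sum_{i=0}^{a_k-1}\Phi(z_i)+\sum_{i=a_k}^{b_k-1}\Phi(z_i)$. The first piece is bounded in absolute value by $M a_k$; because $l_k$ can be chosen to grow fast enough in step (F), the ratio $a_k/b_k$ tends to $0$, so this contribution is negligible after dividing by $b_k$.

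The second piece runs over the block $\tx^{(k)}$, which by (F) is a concatenation of $l_k\lambda$ subblocks, each consisting of an orbit piece $x^{(i)}_{[0,L-1]}$ with $x^{(i)}\in E'_L\subset D_1$ followed by the $\delta$-chain $q_0,\ldots,q_{Q-1}$. The bound from (C) gives $\sum_{j=0}^{L-1}\Phi(f^j(x^{(i)}))\le L(\alpha+\eta/4)$ for each such $x^{(i)}$, and each $\delta$-chain contributes at most $MQ\le M\omega_{max}$ in absolute value. Summing over $l_k\lambda$ subblocks and dividing by $b_k\sim l_k\lambda(L+Q)$, and using the bound $L\ge \omega_{max}M/\eta$ imposed in (C), the combined contribution of all the $\delta$-chain fillers becomes bounded by a small multiple of $\eta$. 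Collecting all the pieces yields $\tfrac{1}{b_k}\sum_{i=0}^{b_k-1}\Phi(z_i)\le \alpha+3\eta$ for large $k$, and switching from $\Phi(z_i)$ back to $\Phi(f^i(u))$ costs at most another $\eta$, so that $\tfrac{1}{b_k}\sum_{i=0}^{b_k-1}\Phi(f^i(u))\le \alpha+4\eta$. Since $\liminf$ is bounded above by any subsequential limit, this proves the claim.

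The main obstacle I anticipate is ensuring $a_k/b_k\to 0$: this requires reading off from the explicit recursion for $\{l_n\}$, $\{l_n'\}$ governing $M_n,M_n'$ in step (F) (i.e.\ formulas (3.23)--(3.25) in \cite{FKOT}) that the combined duration $a_k=\sum_{j<k}(M_j+M_j')$ of all previous blocks is indeed negligible relative to the length $M_k=l_k\lambda(L+Q)$ of the current (C1)-block. This choice is precisely tailored in \cite{FKOT} to provide simultaneously the control needed here and the companion control of $\beta$ required for the upper-average claim, so the step reduces to invoking the existing estimate rather than reconstructing it.
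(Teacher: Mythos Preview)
Your proposal is correct and follows essentially the same argument as the one given in \cite{FKOT} (which the present paper simply cites rather than reproduces): evaluate the Birkhoff average along the subsequence $n=b_k$, split the sum into the ``history'' $[0,a_k-1]$ and the current (C1)-block $[a_k,b_k-1]$, use $a_k/b_k\to 0$ for the first part and the $\mu_1$-genericity estimate from (C) together with $L>\omega_{max}M/\eta$ for the second, and finally absorb the shadowing error via uniform continuity of $\Phi$. Your identification of $a_k/b_k\to 0$ as the one point requiring the explicit growth conditions (3.22)--(3.25) from \cite{FKOT} is exactly right; that is precisely how the original proof closes this step.
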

		\begin{clm}[{\cite[Claim C]{FKOT}}]
			\textit{				For every $u \in A$ 
				we have: $$\limsup_{n\to\infty}\frac1n\sum_{i=0}^{n-1}\Phi(f^i(u))\geq \zeta -5\eta,$$
				where $ \zeta = \xi \alpha  + (1 - \xi)\beta$. By the choice of $\xi_0$ we have $\zeta-\alpha-9\eta>0$. 
		}			\end{clm}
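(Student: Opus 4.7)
The plan is to fix $u = \hat{p}_{\mathcal{Z}} \in A$ corresponding to some $\mathcal{Z} = \{z_i\}_{i \geq 0} \in \mathcal{D}$ and evaluate the Birkhoff averages of $u$ along the subsequence $n_j = a_{j+1}$, i.e.\ at the ends of the $\ty^{(j)}$ blocks. Because $\hat{p}_{\mathcal{Z}}$ is a weak limit of minimal points that $\tfrac{\eps}{4}$-trace the periodic pseudo-orbits extending $\mathcal{Z}_{[0,a_{n_k+1}-1]}$, continuity of $f^i$ on the compact space $X$ gives $d(f^i(u), z_i) \leq \tfrac{\eps}{2}$ for every $i \geq 0$. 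Uniform continuity of $\Phi$, with $\eps$ from step (B) chosen sufficiently small, then yields $|\Phi(f^i(u)) - \Phi(z_i)| < \eta$ for every $i$, so the problem reduces to showing $\tfrac{1}{n_j}\sum_{i<n_j}\Phi(z_i) \geq \zeta - 4\eta$ for all large $j$.

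To obtain this lower bound I would decompose $\sum_{i<n_j}\Phi(z_i)$ block-by-block according to the structure in step (F). Each $\tx^{(n)}$ piece is a concatenation of $l_n\lambda$ length-$(L+Q)$ blocks: the $L$-part is a $\mu_1$-generic orbit segment with $\Phi$-average within $\tfrac{\eta}{4}$ of $\alpha$ (by the generic inequality in step (C)), while the $Q$-padding contributes at most $MQ$ in absolute value to the sum. Each $\ty^{(n)}$ piece is a concatenation of $l'_n\kappa$ length-$(K+P)$ blocks; inside each, the $\xi J$-prefix has average within $\tfrac{\eta}{4}$ of $\alpha$ and the $(1-\xi)J$-segment after the $W$-padding has average within $\tfrac{\eta}{4}$ of $\beta$, so the combined $J$-long average is within $\tfrac{\eta}{2}$ of $\zeta = \xi\alpha + (1-\xi)\beta$. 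The bounds $L, K > M\omega_{max}/\eta$ from steps (C) and (D) ensure each padding term is at most $\eta$ times the corresponding block length.

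The remaining input is the calibration of $M_n = l_n\lambda(L+Q)$ and $M'_n = l'_n\kappa(K+P)$ in step (F) --- specifically inequalities (3.22)--(3.25) of \cite{FKOT} --- arranged so that $\sum_{n\leq j}M'_n/a_{j+1} \to 1$ and $\sum_{n\leq j}M_n/a_{j+1} \to 0$ as $j \to \infty$. Combining this dominance with the block-wise averages gives $\tfrac{1}{n_j}\sum_{i<n_j}\Phi(z_i) \geq \zeta - 4\eta$, and the transfer estimate from step one upgrades this to $\limsup_n \tfrac{1}{n}\sum_{i<n}\Phi(f^i(u)) \geq \zeta - 5\eta$, which is the claim. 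The closing remark $\zeta - \alpha - 9\eta > 0$ is immediate from $\zeta - \alpha = (1-\xi)(\beta-\alpha) > (1-\xi_0)(\beta-\alpha) > 9\eta$, using the choice of $\xi_0$ and $\eta$ fixed before step (A).

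The main obstacle is the cumulative bookkeeping of error terms: every padding chain of length at most $\omega_{max}$, every $\tfrac{\eta}{4}$-tolerance coming from the generic-point inequalities, and the $\eta$ continuity defect of $\Phi$ must be summed and shown to fit inside the declared $5\eta$ budget. The quantitative relations among $L, J, K, Q, P, W, \omega_{max}, M$ and $\eta$ enforced in steps (C) and (D) are exactly what makes this accounting close; verifying that the dominance $\sum M_n / \sum M'_n \to 0$ really follows from the details of (3.22)--(3.25) of \cite{FKOT} is the other delicate point and is where a careful inductive definition of $l_n$ and $l'_n$ is required.
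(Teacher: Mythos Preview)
The paper does not prove this claim in-house: it is quoted from \cite[Claim C]{FKOT} and used without reproducing the argument. Your sketch is a correct outline of the standard proof (and presumably the one given in \cite{FKOT}): evaluate the Birkhoff averages along the subsequence $n_j=a_{j+1}$, transfer from $f^i(u)$ to $z_i$ via the $\frac{\eps}{2}$-tracing and uniform continuity of $\Phi$, decompose the pseudo-orbit sum according to the block structure of step (F), and invoke the rapid growth of $M'_n$ relative to $a_n$ encoded in formulas (3.22)--(3.25) of \cite{FKOT} so that the $\ty^{(n)}$-blocks (whose averages lie within $\tfrac{\eta}{2}+$ padding errors of $\zeta$) dominate. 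Your accounting of the error budget and the verification that $\zeta-\alpha=(1-\xi)(\beta-\alpha)>(1-\xi_0)(\beta-\alpha)>9\eta$ are both correct. There is no substantive difference to flag, since the paper offers no alternative argument of its own.
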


		For any $\delta$-pseudo obit $\mathcal{Z}=\{z_i\}_{i\geq 0} \in \mathcal{D}$ and any $k \in \mathbb{N}$  consider $\xi^{\mathcal{Z}}_k = \mathcal{Z}_{[0,a_k-1]}$. There exists a point $u \in A$ which $\eps$-shadows $\mathcal{Z}$, hence for $i=0,\dots,a_k-1$ we have $d(f^i(u),z_i)<\eps$. Let $Z_k\subset A$ be the set of points chosen in such a way, that for any $\mathcal{Z} \in \mathcal{D}$ there exists exactly one point $u^{\mathcal{Z}}_k \in Z_k$ such that $u^{\mathcal{Z}}_k$ is $\eps$-shadowing $\xi^{\mathcal{Z}}_k$. 
		Moreover, we assume that each $u\in Z_k$ is $\eps$-shadowing some (unique) $\mathcal{Z}\in \mathcal{D}$ with the prefix $\xi^{\mathcal{Z}}_k$ (we fix one such pseudo-orbit in each step of the construction). 
		This way, we may assume that $Z_k\subset Z_{k+1}$, since when  $\mathcal{Z}\in \mathcal{D}$ is selected at step $k$, point $u$ is shadowing $\mathcal{Z}_{[0,a_s-1]}$ for every $s\geq k$. Therefore, we can extend $Z_k$ to $Z_{k+1}$ in a simple way, constructing shadowing points to those $\xi^{\mathcal{Q}}_{k+1}$, $\mathcal{Q}\in\mathcal{D}$ that are not shadowed by points in $Z_k$. Note also that the construction implies that every part of the $\delta$-pseudo obit of length $a_k$ may be extended to a proper part of the $\delta$-pseudo obit of length $a_{k+1}$ in $\mathcal{D}$ in exactly $|E'_L|^{l_k\lambda}|\Gamma_K|^{l'_k\kappa}$ possible ways, and each of them must be shadowed by a different point,  because by the definition elements of each $E_n'$ are $(n,4\eps)$-separated. This implies that: 
		\begin{equation}\label{eq:Zk}
			|Z_k| = |E'_L|^{l_k\lambda}|\Gamma_K|^{l'_k\kappa}|Z_{k-1}|, k>1.
		\end{equation}	
		Observe that the family $\{B_{a_k}(q,2\eps)\}_{q \in Z_k}$ is a covering of $A$.
		
		Note also that considering estimation (\ref{ineq:E'card}) from (C) on the cardinality of $E_{m_k}$ together with estimation (\ref{ineq:Gamma}) on the cardinality of $\Gamma_k$ from (E) for $m_k = L$ together with the fact that $h_Y(f,4\eps)$ increases when $\eps$ decreases we have:
			\begin{equation}\label{eq:Ecardeps}
				|E'_L| 
				\geq e^{L( h_Y(f,4\eps)-3\gamma)}.
			\end{equation}
			\begin{equation}\label{eq:Gammacardeps}
				|\Gamma_k|
				\geq e^{J(h_Y(f,4\eps)-3\gamma)}
			\end{equation}
		Take $\nu_k = \sum_{q \in Z_k}\delta_q$ and $\mu_k = \frac{1}{|Z_k|}\nu_k$. Observe that for every $k \in \mathbb{N}$ we have:
		$$
		\mu_k(A) = \mu_k(Z_k) = 1.
		$$
		The sequence of measures $\{\mu_k\}_{k\geq 0}$ has a subsequence convergent to some measure $\mu$ and, as $A$ is closed, by standard properties of weak* topology we have $\mu(A)=1$ as well (e.g. see \cite[Proposition 2.7]{Den}). 
		Now fix an arbitrary yet large $n \in \mathbb{N}$  such that for $\tau$ chosen before we have:
		$$
		n-\lambda(L+Q)\geq (1-\tau)n.
		$$
		There is some $k \in \mathbb{N}$ with $a_k<n<a_{k+1}$ . Fix any $r>k$.
		Take $q \in X$ such that $B_n(q,\frac{\eps}{2})\cap A\neq \emptyset$. Then there exists some $u \in B_n(q,\frac{\eps}{2})\cap A$, and, as $\{B_{a_k}(v,2\eps)\}_{v \in Z_k}$ covers $A$, there is a pseudo-orbit $\mathcal{Z}\in \mathcal{D}$ such that $u \in B_{a_k}(u^{\mathcal{Z}}_k,2\eps)$, hence $d_{a_k}(u^{\mathcal{Z}}_k,q)~<~3\eps$. Since $\lambda(L+Q)=\kappa(K+P)$ for every $k \in \mathbb{N}$ we have:
		$$
		a_k = \lambda(L+Q)(l_1+l'_1+\dots+l_k+l'_k).
		$$
		We are going to estimate how many elements of $z\in Z_{k+1}$ satisfy  $d_{a_{k+1}}(z,q)<3\eps$. By the construction we may assume that each such $z$ must be shadowing $\mathcal{Z}_{[0,n-1]}$
		We distinguish two cases, depending on whether the $n$-th point in $\delta$-pseudo obit $\mathcal{Z}$ belongs to the block $\tx^{(k)}$ or $\ty^{(k)}$. \\
		{\bf Case 1. } Assume $a_k < n \leq b_k$. Fix $s \in \mathbb{N}$ such that:
		$$
		a_k+s\lambda(L+Q)<n<a_k+(s+1)\lambda(L+Q).
		$$
		The above implies:
		$$
		a_k+(s-1)\lambda(L+Q)<n-\lambda(L+Q)<a_k+s\lambda(L+Q).
		$$
		
		Note that if we take any point $u \in Z_r \cap B_n(q,\frac{\eps}{2})$ and the associated $\delta$-pseudo-orbit, then its first $n$ elements are uniquely determined by $q$. Therefore by the construction of the $\delta-$pseudo orbits in $\mathcal{D}$ (cf. (\ref{eq:Zk}) ), 
		the necessity of shadowing of the initial segment of $\mathcal{Z}$ and by estimations (\ref{eq:Ecardeps}) and (\ref{eq:Gammacardeps}) we have (for large $r$):
		\begin{align*}
			\mu_r\left(B_n\left(q,\frac{\eps}{2}\right)\right) &= \frac{1}{|Z_r|}\nu_r\left(B_n\left(q,\frac{\eps}{2}\right)\right)\leq \frac{1}{|Z_r|}|E'_L|^{\lambda(l_{k+1}+\dots+l_{r}-s)}|\Gamma_K|^{\kappa(l'_{k+1}+\dots + l'_{r})}	\\
			&\leq |E'_L|^{-\lambda(l_1+\dots + l_k +s)}|\Gamma_K|^{-\kappa(l'_1+\dots +l'_k)}\\
			&\leq e^{-L(h_Y(f,4\eps)-3\gamma  )\lambda(l_1+\dots+l_k+s)}\cdot e^{-J(h_Y(f,4\eps)-3\gamma)\kappa(l'_1+\dots+l'_k)}\\
			&\leq e^{-(h_Y(f,4\eps)-3\gamma)\left[L\lambda(l_1+\dots + l_{ k}
				+s)+J\kappa(l'_1+\dots+l'_k)\right]}\\
			&\leq e^{-(h_Y(f,4\eps)-3\gamma)\left[(1-\tau)(L+Q)\lambda(l_1+\dots + l_{ k}+s)+\kappa(1-\tau)(K+P)(l'_1+\dots+l'_k) \right]}\\
			&\leq e^{-(h_Y(f,4\eps)-3\gamma)(1-\tau)(L+Q)\lambda(l_1+\dots+l_{ k}+s+l_1'+\dots+l'_k)}\\
			&\leq e^{-(h_Y(f,4\eps)-3\gamma)(1-\tau)(n-\lambda(L+Q))}\\
			&\leq e^{-(h_Y(f,4\eps)-3\gamma)(1-\tau)^2n}
		\end{align*}
		
		{\bf Case 2. } Assume that $b_k\leq n<a_{k+1}$. Fix $s' \in \mathbb{N}$ such that:
		$$
		b_{k+1}+(s'-1)\kappa(K+P)<n-\kappa(K+P)<b_{k+1}+s'\kappa(K+P).
		$$
		The above implies:
		$$
		b_{k+1}+(s'-1)\kappa(K+P)\leq n -\kappa(K+P)<b_{k+1}+s'\kappa(K+P).
		$$
		By almost the same argument as in the previous case we get:
		\begin{align*}
			\mu_r\left(B_n\left(q,\frac{\eps}{2}\right)\right) &= \frac{1}{|Z_r|}\nu_r\left(B_n\left(q,\frac{\eps}{2}\right)\right)\leq \frac{1}{|Z_r|}|E'_L|^{\lambda(l_{k+2}+\dots+l_{r})}|\Gamma_K|^{\kappa(l'_{k+1}+\dots + l'_{r}-s')}	\\
			&\leq |E'_L|^{-\lambda(l_1+\dots + l_{k+1}) }|\Gamma_K|^{-\kappa(l'_1+\dots +l'_k+s')}\\
			&\leq e^{-L(h_Y(f,4\eps)-3\gamma  )\lambda(l_1+\dots+l_{k+1})}\cdot e^{-J(h_Y(f,4\eps)-3\gamma)\kappa(l'_1+\dots+l'_k+s')} \\
			&\leq e^{-(h_Y(f,4\eps)-3\gamma)\left[L\lambda(l_1+\dots + l_{k+1})+J\kappa(l'_1+\dots+l'_{k}+s')\right]}\\
			&\leq e^{-(h_Y(f,4\eps)-3\gamma)\left[(1-\tau)\lambda(L+Q)(l_1+\dots + l_{k+1})+(1-\tau)\kappa(K+P)(l'_1+\dots+l'_{k}+s')\right]}\\
			&\leq e^{-(h_Y(f,4\eps)-3\gamma)\left[(1-\tau)\lambda(L+Q)(l_1+\dots + l_{k+1}+l'_1+\dots+l'_{k}+s')\right]}\\
			&\leq e^{-(h_Y(f,4\eps)-3\gamma)(1-\tau)(n-\lambda(L+Q))}\\
			&\leq e^{-(h_Y(f,4\eps)-3\gamma)(1-\tau)^2n}.
		\end{align*}
		
		Altogether, for sufficiently large $r$ we have:
		$$
		\mu_r\left(B_n\left(q,\frac{\eps}{2}\right)\right)\leq e^{-\left(h_Y(f,4\eps)-3\gamma\right)(1-\tau)^2n},
		$$
		hence by the Proposition \ref{prop:Thomp} we get $h_A\left(f,\frac{\eps}{2}\right)\geq \left( h_Y(f,4\eps)-3\gamma\right)(1-\tau)$ and, as $\tau$ is arbitrarily small: 
		$$h_A\left(f,\frac{\eps}{2}\right)\geq h_Y(f,4\eps)-3\gamma.$$
		Considering the topological entropy it follows that:
		$$
		h_{I_{\Phi}(f)\cap CR(f)}\left(f,\frac{\eps}{2}\right) \geq  h_A\left(f,\frac{\eps}{2}\right) \geq h_Y(f,4\eps)-3\gamma,
		$$
		and therefore, since $A\subset  B(Y,\eps)$:
		$$
		h_{I_{\Phi}(f)\cap B(Y,\eps)\cap CR(f)}\left(f,\frac{\eps}{2}\right) \geq h_A\left(f,\frac{\eps}{2}\right)\geq  h_Y(f,4\eps)-3\gamma.
		$$
		Moreover, $\gamma>0$ is arbitrary, hence:
		$$
		h_{I_{\Phi}(f)\cap B(Y,\eps)\cap CR(f)}\left(f,\frac{\eps}{2}\right) \geq h_A\left(f,\frac{\eps}{2}\right) \geq h_Y(f,4\eps).
		$$
	Now, for any fixed $\theta$ and $\eps>\theta$ we have:
			$$
			h_{I_{\Phi}(f)\cap B(Y,\eps)\cap CR(f)}\left(f,\frac{\theta}{2}\right) \geq  h_Y(f,4\theta),
			$$ 
			Taking the limit as $\theta\to 0$ in case $h_Y(f)<\infty$ we get:
			$$
			h_{I_{\Phi}(f)\cap B(Y,\eps)\cap CR(f)}(f)\geq h_Y(f),
			$$
			while in case $h_Y(f)=\infty$ we obtain: $$h_{I_{\Phi}(f)\cap B(Y,\eps)\cap CR(f)}(f)=\lim_{\theta\to 0} h_Y(f,4\theta)=\infty,$$
			which proves the formula in \eqref{ineq:a}. 
		
		For metric mean dimension 
		the reasoning for both infinite and finite $h_Y(f)$ is similar: 
		for $\eps>0$ chosen above and any $\theta \in (0,\eps)$ we get:
		$$
		\frac{h_{I_{\Phi}(f)\cap B(Y,\eps)\cap CR(f)}(f,\frac{\theta}{2})}{-\log \theta}  \geq \frac{  h_Y(f,4\theta) }{-\log \theta}
		$$
		Going with  $\theta \to 0$  to the lower and upper limit we get the formulas \eqref{ineq:b} and \eqref{ineq:c}, respectively :
		\begin{align*}
			\mdiminf_{I_{\Phi}(f)\cap B(Y,\eps)\cap CR(f)}(f) & \geq \mdiminf_Y(f),\\
			\mdimsup _{I_{\Phi}(f)\cap B(Y,\eps)\cap CR(f)}(f) & \geq \mdimsup_Y(f).
		\end{align*}
	\end{proof}
	\begin{corollary}\label{cor:1}
		Let $(X,f)$ be a dynamical system with shadowing property, let $Y\subset X$ be an isolated chain recurrent class and $\Phi \in \mathcal{C}(X,\mathbb{R})$ be a continuous map. If there exist ergodic measures $\mu_1,\mu_2 \in \mathcal{M}_e(Y)$ such that 
		$$
		\int\Phi d\mu_1 \neq \int\Phi d\mu_2,
		$$
		then:
		\begin{enumerate}
			\item $h_{I_{\Phi}(f)\cap Y}(f) = h_Y(f)$,
			\item ${\mdiminf}_{I_{\Phi}(f)\cap Y}(f)= \mdiminf_Y(f)$,
			\item ${\mdimsup}_{I_{\Phi}(f)\cap Y}(f)= \mdimsup_Y(f)$.
		\end{enumerate}  
	\end{corollary}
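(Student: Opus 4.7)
The plan is to combine Theorem \ref{thm:mdim} with the extra information provided by isolation and a standard monotonicity argument for Bowen entropy and relative metric mean dimension. Recall that by assumption $Y$ is an isolated chain recurrent class, i.e.\ there exists a neighborhood $V$ of $Y$ such that $Y$ is the only chain recurrent class intersecting $V$; in particular $CR(f)\cap V = Y$. Choose $\eps_0>0$ so small that $B(Y,\eps_0)\subset V$, and fix any $\eps\in(0,\eps_0)$. Then
$$
I_{\Phi}(f)\cap B(Y,\eps)\cap CR(f)\subset CR(f)\cap B(Y,\eps)=Y,
$$
so in fact
$$
I_{\Phi}(f)\cap B(Y,\eps)\cap CR(f) = I_{\Phi}(f)\cap Y.
$$

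For the lower bounds, I would simply apply Theorem \ref{thm:mdim} with this $\eps$. Substituting the identity above into parts \eqref{ineq:a}, \eqref{ineq:b}, \eqref{ineq:c} of the theorem immediately yields
$$
h_{I_{\Phi}(f)\cap Y}(f)\geq h_Y(f), \quad \mdiminf_{I_{\Phi}(f)\cap Y}(f)\geq \mdiminf_Y(f), \quad \mdimsup_{I_{\Phi}(f)\cap Y}(f)\geq \mdimsup_Y(f).
$$
Note that the non-emptiness clause of Theorem \ref{thm:mdim} also guarantees $I_{\Phi}(f)\cap Y\neq\emptyset$, so the statements are meaningful.

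For the matching upper bounds, I would invoke monotonicity of the set-theoretic quantities: since $I_{\Phi}(f)\cap Y\subset Y$, every Bowen cover of $Y$ by $(v,\eps)$-balls is also a cover of $I_{\Phi}(f)\cap Y$, so $C(I_{\Phi}(f)\cap Y,t,n,\eps,f)\leq C(Y,t,n,\eps,f)$ for all admissible parameters. Passing through the definitions this gives $h_{I_{\Phi}(f)\cap Y}(f,\eps)\leq h_Y(f,\eps)$ for every $\eps>0$, and hence $h_{I_{\Phi}(f)\cap Y}(f)\leq h_Y(f)$. Dividing by $-\log\eps$ and taking $\liminf$ and $\limsup$ as $\eps\to 0$ then yields the analogous upper bounds for the lower and upper relative metric mean dimensions. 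Combining with the previously obtained lower bounds produces the three required equalities.

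I do not expect any serious obstacle here: both ingredients are essentially off-the-shelf. The only subtle point is the identification $CR(f)\cap B(Y,\eps)=Y$, which relies solely on the isolation assumption and a correct choice of $\eps$; everything else is bookkeeping via Theorem \ref{thm:mdim} and the monotonicity of $h_Z(f,\eps)$ under inclusion $Z_1\subset Z_2$.
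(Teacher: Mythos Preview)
Your proposal is correct and matches the paper's own argument essentially line for line: the paper also uses isolation to identify $I_{\Phi}(f)\cap B(Y,\eps)\cap CR(f)$ with $I_{\Phi}(f)\cap Y$ for small $\eps$, invokes Theorem~\ref{thm:mdim} for the lower bounds, and dismisses the reverse inequalities as following ``directly by definitions''. Your added detail on monotonicity of $h_Z(f,\eps)$ under inclusion is exactly what that phrase unpacks to.
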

	\begin{proof}
		By assumption, for sufficiently small $\eps>0$, we have:
		$$I_{\Phi}(f)\cap B(Y,{\eps})\cap CR(f) = I_{\Phi}(f)\cap Y,$$
		and therefore, by Theorem \ref{thm:mdim}:
		\begin{align*}
			h_{I_{\Phi}(f)\cap Y}(f) &\geq h_Y(f)\\
			{\mdiminf}_{I_{\Phi(f)}\cap Y}(f) & \geq \mdiminf_Y(f),\\
			{\mdimsup}_{I_{\Phi(f)}\cap Y}(f) & \geq \mdimsup_Y(f).
		\end{align*}
		The converse inequalities follow directly by definitions. The proof is completed.
	\end{proof}
	
	As the last element, we present a simple consequence of the obtained results. It is also a consequence of Theorem \ref{thm:E} by Lima and Varandas (\cite[Theorem E]{LimVar}). Estimates on entropy were obtained first in \cite{DOT}, and then extended in \cite{FKOT}.
	\begin{corollary}
		Let $(X,f)$ be a transitive dynamical system with shadowing property. Then:
		\begin{enumerate}
			\item $ {h}_{I_{\Phi}}(f)= \htop(f)$,
			\item ${\mdiminf}_{I_{\Phi}(f)}(f)= \mdiminf(f)$,
			\item ${\mdimsup}_{I_{\Phi}(f)}(f)= \mdimsup(f)$.
		\end{enumerate}  
	\end{corollary}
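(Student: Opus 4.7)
The plan is to recognize the corollary as an immediate specialization of Theorem \ref{thm:mdim} to the case $Y = X$. First I would verify that, under transitivity plus the shadowing property, the whole space $X$ is itself a chain recurrent class with $CR(f) = X$. Transitivity yields a point with dense forward orbit, hence $\Omega(f) = X$; the shadowing property then forces $CR(f) = \Omega(f) = X$. Transitivity also implies chain-transitivity of $X$: for any $x, y \in X$ and any $\delta > 0$, one truncates a piece of dense orbit that passes $\delta$-close first to $x$ and then to $y$ to produce a $\delta$-chain from $x$ to $y$. Hence there is a single chain recurrent class, namely $Y = X$. In particular $B(Y,\eps) = X$ and $CR(f) = X$, so the intersection $I_\Phi(f) \cap B(Y,\eps) \cap CR(f)$ appearing in Theorem \ref{thm:mdim} collapses to $I_\Phi(f)$.

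Under the implicit non-triviality hypothesis on $\Phi$ (existence of $\mu_1, \mu_2 \in \mathcal{M}_e(X)$ with $\int \Phi\, d\mu_1 \neq \int \Phi\, d\mu_2$, without which $I_\Phi(f)$ may be empty and the claim reduces to the trivial $\htop(f) = 0$ case), Theorem \ref{thm:mdim} with $Y = X$ directly yields
\[
h_{I_\Phi(f)}(f) \geq \htop(f), \qquad \mdimsup_{I_\Phi(f)}(f) \geq \mdimsup(f), \qquad \mdiminf_{I_\Phi(f)}(f) \geq \mdiminf(f).
\]
The matching upper bounds are automatic: since $I_\Phi(f) \subseteq X$, monotonicity of Bowen topological entropy under inclusion gives $h_{I_\Phi(f)}(f,\eps) \leq h_X(f,\eps)$ for every $\eps > 0$, and dividing by $-\log\eps$ and passing to $\liminf_{\eps \to 0}$ or $\limsup_{\eps \to 0}$ transfers the same bound to both relative metric mean dimensions. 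Invoking the identifications $h_X(f) = \htop(f)$, $\mdimsup_X(f) = \mdimsup(f)$, $\mdiminf_X(f) = \mdiminf(f)$ recalled in the paragraph following Definition \ref{def:Relmdim}, all three equalities follow.

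I do not anticipate any substantive obstacle: the argument is the structural reduction $Y = X$ combined with trivial monotonicity. The only caveat worth flagging is the reading of the hypothesis on $\Phi$, which one may alternatively sidestep by invoking Corollary \ref{cor:1} with $Y = X$ (an isolated chain recurrent class, trivially, since it is the only one).
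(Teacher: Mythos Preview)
Your proposal is correct and follows essentially the same route as the paper: reduce to $Y=X$ (the unique chain recurrent class under transitivity plus shadowing), invoke Corollary~\ref{cor:1}, and use monotonicity for the reverse inequalities. The only difference in presentation is that the paper disposes of the hypothesis on $\Phi$ by a case split---if $\htop(f)=0$ all three equalities are trivial, while if $\htop(f)>0$ the existence of ergodic measures $\mu_1,\mu_2$ with $\int\Phi\,d\mu_1\neq\int\Phi\,d\mu_2$ is asserted via a ``standard argument'' citing \cite{LO,LO2}---whereas you flag this as an implicit non-triviality assumption on $\Phi$; your caveat is fair, since for constant $\Phi$ the conclusion can fail when $\htop(f)>0$.
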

	\begin{proof}
		If entropy of $f$ is zero then the result is obvious. If it is positive, then by standard argument (e.g. see \cite{LO}, cf. \cite{LO2}) there exist ergodic measures $\mu_1,\mu_2 \in \mathcal{M}_e(Y)$ such that 
		$$
		\int\Phi d\mu_1 \neq \int\Phi d\mu_2.
		$$
		Then the result is an immediate consequence of Corollary~\ref{cor:1}, since in our case $X$ is the unique chain-recurrent class.
	\end{proof}

	Despite our attempts to solve the problem, the following question remains open: 
	\begin{que}
		Let $(X,f)$ be a dynamical system with shadowing property. Is it true, that $\mdim_X(f)$ is defined and 
		$$
		\mdim_{I(f)}(f) = \mdim(f)?
		$$
	\end{que}
	
	\section*{Acknowledgements}
	We are grateful to the referees for careful reading and their valuable comments and suggestions that helped to improve this paper and simplify some arguments. 
	
	Piotr Oprocha was supported by National Science Centre, Poland (NCN), grant no. 2019/35/B/ST1/02239.

\end{document}